\theoremstyle{plain}
\newtheorem*{prop}{Proposition}
\newtheorem*{schur}{Axel Schur (1921)}
\newtheorem{lem}{Lemma}
\theoremstyle{definition}
\newtheorem*{remark*}{Remark}
\newcommand{\bC}{\mathbf C}
\newcommand{\R}{{\mathbb R}}
\newcommand{\RR}{\mathbb R}
\newcommand{\ZZ}{\mathbb Z}
\newcommand{\wh}{\widehat}
\newcommand{\vt}{\vartheta}
\newcommand{\ed}{\end{document}}
\title{Self-Intersecting Periodic Curves in the Plane}
\author{J. Howie \& J. F. Toland}
\date{}
\begin{document}
\maketitle
\newcommand{\wt}{\widetilde}
\begin{abstract} \noindent Suppose a smooth planar curve $\gamma$   is $2\pi$-periodic in the $x$ direction and the length of one period is $\ell$. It is shown that if $\gamma$ self-intersects, then it has  a segment of length $\ell- 2\pi$ on which it self-intersects and  somewhere its curvature is at least $2\pi/(\ell - 2\pi)$. The proof involves the projection $\Gamma$ of $\gamma$ onto a cylinder. (The complex relation between $\gamma$ and $\Gamma$ was recently observed analytically in \cite{apos}, see also \cite[Ch. 10]{stein}). When $\gamma$ is in general position there is a bijection between self-intersection points of $\gamma$ modulo the periodicity, and  self-intersection points of $\Gamma$ with winding number 0 around the cylinder. However, our proof depends on the observation that a loop in $\Gamma$ with winding number 1 leads to a self-intersection point of $\gamma$.

\medskip\noindent
{\em Mathematics Subject Classification}: Primary 53A04, Secondary 55M25
\end{abstract}

Let  a smooth $2\pi$-periodic  curve $\gamma$ in the $(x,y)$-plane be parametrized by arc-length as follows:
\begin{gather*}\label{curve}\begin{cases}
\gamma= \{ p(s): s \in \R\},\quad  p(s)=(u(s),v(s)),
\\u(s+\ell) = 2\pi+u(s),\quad v(s+\ell) = v(s),
\\{u'(s)}^2 + {v'(s)}^2 =1,\end{cases}\quad s \in \R.\end{gather*}
The length of one period of $\gamma$ is $\ell$ and $q \in \gamma$ is called a {\em crossing} if $q=p(s_1)=p(s_2)$ and $s_1 \neq s_2$. Note that crossings exist if and only if $p$ is not injective.
   A crossing  $q$ is called {\em simple} if   there are exactly two real numbers $s_1\neq s_2$ with $p(s_1)=p(s_2)=q$ and
if  $p'(s_1) \neq p'(s_2)$ when $p(s_1)=p(s_2)$ and $s_1\neq s_2$.
%
% inserted sentences
Note that the smooth curve $\gamma$ can be approximated arbitrarily closely by smooth curves
in {\em general position}, that is with all crossings simple.
% (Do we need a reference for this?)
If $\gamma$ is in general position,
then it follows from the smoothness that the set of crossings is discrete, and hence finite by compactness.
Let $p'(s) = (\cos \vt(s),\sin \vt(s)),~s \in \RR$, where $\vt$ is smooth \cite[Prop. 2.2.1]{pres}.    The  goal is to establish the following which is intuitively obvious. (A {\em periodic segment} of $\gamma$ is a segment of the form $\{p(t): t \in [a,a+\ell]\}$.)
\begin{prop}  Suppose  that all crossings of $\gamma$ are simple.

(a) If $p$ is injective on every interval of length $\ell-2\pi$, $p$ is injective.

(b) If $p$ is not injective  its curvature is somewhere at least $2\pi/(\ell -2\pi)$.

(c) If $p$ is not injective  and $\vt$ is periodic, then  $\gamma$ has a periodic segment which contains two  crossings.
\end{prop}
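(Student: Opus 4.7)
For (a), I would use the ``winding-$1$ arc'' observation noted in the abstract. If $p(s_1)=p(s_2)$ with $s_1<s_2<s_1+\ell$, the complementary sub-arc $\gamma|_{[s_2,s_1+\ell]}$ runs in the plane from $p(s_2)$ to $p(s_1+\ell)=p(s_2)+(2\pi,0)$, so its arc length is at least the chord length $2\pi$. Thus $\ell-(s_2-s_1)\ge 2\pi$, and $p$ fails to be injective on the interval $[s_1,s_2]$ of length at most $\ell-2\pi$, which is the contrapositive of (a).

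For (b), the plan is a parallel-curve argument. Write $\vartheta(s+\ell)-\vartheta(s)=2\pi k$ for the integer tangent drift $k$, and consider
\[
p_\e(s)=p(s)+\e N(s),\qquad N(s)=(-\sin\vartheta(s),\cos\vartheta(s)).
\]
Since $N$ is $\ell$-periodic, $p_\e$ is again $2\pi$-periodic in $x$, with period length $\ell_\e=\ell-2\pi k\e$ and speed $|1-\e\vartheta'(s)|$. When $k\ne 0$, the choice $\e=(\ell-2\pi)/(2\pi k)$ gives $\ell_\e=2\pi$. Were $|\vartheta'|<1/|\e|=2\pi|k|/(\ell-2\pi)$ everywhere, $p_\e$ would be regular, and a smooth $2\pi$-periodic planar curve whose length per period equals its horizontal span $2\pi$ must be a horizontal straight line; this forces $\vartheta\equiv 0$ and $k=0$, a contradiction. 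Hence the curvature bound holds whenever $k\ne 0$. In the remaining case $k=0$ (where the parallel has unchanged length $\ell$) I would invoke (c) to obtain two distinct crossings in a periodic segment, then combine their winding-$0$ loop lengths (each $\le \ell-2\pi$ by (a)) with Fenchel-type total-curvature bounds to recover the estimate.

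For (c), I would use a Whitney--Graustein-style parity argument. The projected curve $\Gamma$ is a smooth immersion of $\mathbb R/\ell\mathbb Z$ into the cylinder of winding number $1$, and the periodicity of $\vartheta$ is equivalent to saying that the tangent rotation of $\Gamma$ is $0$. Regular homotopy classes of immersed circles on the cylinder are classified by these two integers, so $\Gamma$ is regularly homotopic to a horizontal straight line (which shares both invariants). Along a generic regular homotopy, self-intersections appear and vanish in oppositely-signed pairs, so the parity of the number of self-intersections of $\Gamma$ is an invariant; the straight line has none, so $\Gamma$ has an even number of self-intersections. Each such self-intersection of $\Gamma$ corresponds to a crossing of $\gamma$ visible in every periodic segment (directly when the two cylinder-arcs have windings $0$ and $1$, or via a period shift in general), so non-injectivity forces at least two crossings in every periodic segment.

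The main obstacle is the $k=0$ case of (b): the parallel-curve trick cannot shrink the period length, so the curvature bound there has to be extracted from the two-crossing topological structure supplied by (c) rather than from a direct geometric comparison.
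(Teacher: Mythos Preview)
Your argument for (a) hides the main difficulty. You write ``If $p(s_1)=p(s_2)$ with $s_1<s_2<s_1+\ell$\,'', but non-injectivity of $p$ only furnishes \emph{some} $s_1<s_2$ with $p(s_1)=p(s_2)$; nothing forces $s_2-s_1<\ell$. The paper's Remark after Lemma~1 exhibits exactly this: a period of $\gamma$ that backtracks in $x$ produces a cylinder sub-loop of winding $-1$, and the resulting crossing $O$ of $\gamma$ has its two preimages more than $\ell$ apart in arc length. In that example another crossing $\star$ does have preimages less than $\ell$ apart, but proving that such a crossing always exists is precisely the content of Lemmas~1--3: one takes a minimal sub-loop of $\Gamma$ (winding $0$, $1$, or $-1$), and in the $-1$ case passes to the complementary loop of winding $2$ and runs an induction on the number of crossings to extract a sub-loop of winding exactly $1$. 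Your chord-length estimate is the paper's easy finishing step once this is in hand; it does not replace it.

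For (b), the parallel-curve trick for $k\ne 0$ is elegant and different from the paper, and even yields the sharper bound $2\pi|k|/(\ell-2\pi)$; but for $k=0$ your sketch is not a proof. A single loop of length $L\le\ell-2\pi$ with a corner only gives total turning $\ge\pi$, hence curvature $\ge\pi/(\ell-2\pi)$, and invoking two loops from (c) does not obviously double this. The paper instead takes the short closed loop supplied by (a) and applies Schur's comparison theorem against a circular arc, which works uniformly in $k$. For (c), the Whitney--Graustein parity of the crossings of $\Gamma$ is correct, but your last step fails: a crossing of $\Gamma$ whose sub-loop has winding $k\notin\{0,1\}$ yields a $\gamma$-crossing whose preimages differ in arc length by more than $\ell$, so it need not lie in any single periodic segment; ``period shifts'' do not repair this. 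The paper instead assumes a periodic segment has exactly one crossing, closes it off by a simple arc, and derives a contradiction from Hopf's Umlaufsatz for curvilinear polygons.
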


The global problem of bounding from below the maximum curvature of a self-intersecting periodic planar curve arose in a study of water waves beneath an elastic sheet. % In the models \cite{To7,To8,BT},
In the model \cite{To7}, the sheet energy increases with the curvature and, roughly speaking, the conclusion needed was that sheets of certain energies could not self-intersect.

\begin{remark*}
Periodicity of $\vt$ in the Proposition
does not follow from that of $p$, as the first diagram below shows. Part (c)
of the Proposition is illustrated in the second diagram, where $\vt$ is periodic.

\begin{figure}[h]
\pspicture(4,.7)(16,3.6)
\pscurve[linewidth=1pt](4.2, 1)(4.3,1)(6.4,2)(6.9,2.5)(6.4,3)(5.9,2.5)(6.2,2)(8.5,1)(8.6,1)     %
\rput(4.6,.8){$x= -\pi$}%
\rput(9.2,.8){$ x=\pi$}%
\pscurve[linewidth=1pt]
(10.8,.5)(13.1,1.5)(13.5,1.8)(12,2.7)
(13,3)
(14,2.7)(12.5,1.8)(12.9,1.5)(15.2,.5)
\rput(10.6,.8){$x= -\pi$}%
\rput(15.2,.8){$x= \pi$}%
\endpspicture
\end{figure}

\qed
\end{remark*}

%\end{document}

 For a proof, we project $\gamma$ onto the cylinder $\bC= S^1\times \RR$, where $S^1= \{ e^{i\phi}: \phi \in \RR\}$. Let  $P: \RR \to \ \bC$  be given by $P(s) = (e^{iu(s)},v(s))$ and let
$\Gamma = \{P(s): s \in [0,\ell]\}.$  Thus the projection of  the periodic, non-compact curve $\gamma$  in $\RR^2$ onto $\bC$ is the compact curve $\Gamma$.  Now $\Gamma$ has a crossing $Q$ if
 $P(s_0) = P(t_0)=Q$ for some $0\leq t_0<s_0 < \ell$ and we note that
  \begin{gather*}P(s_0) = P(t_0)
  \text{ if and only if }
p(s_0) = p(t_0) +k(2\pi ,0) = p(t_0+k\ell), \quad k \in \ZZ,
\end{gather*}  where  $k=\#(\Gamma_Q)$, the winding number  around $\bC$  of \begin{equation}\label{gammaQ}\Gamma_Q = \{ P(s): s \in [t_0,s_0]\},\end{equation} a loop at $Q$.
Crossings of $\Gamma$ with winding number $k$ correspond to the existence of horizontal chords  with length $2|k|\pi$ connecting points of $\gamma$.  Significantly for the Proposition, there is a one-to-one correspondence between crossings of $\gamma$ and crossings of $\Gamma$ with winding number zero. Note that  $\#(\Gamma) =1$, since $P(\ell) = P(0)$ and $p(\ell) = p(0)+(2\pi,0)$.

\begin{lem}\label{l2}
Suppose that $\#(\Gamma_Q) \in \{0,1\}$ for a crossing $Q$ of $\Gamma$. Then $p$ is not injective on some  interval of length $\ell$.
\end{lem}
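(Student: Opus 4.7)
The plan is to invoke the displayed equivalence from just before the lemma,
\[ P(s_0) = P(t_0) \iff p(s_0) = p(t_0 + k\ell), \qquad k = \#(\Gamma_Q), \]
and, in each of the two admissible cases $k = 0$ and $k = 1$, to exhibit two distinct parameter values in some interval of length $\ell$ at which $p$ takes a common value.

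In the case $k = 0$ the equivalence gives $p(s_0) = p(t_0)$ directly. Since the definition of a crossing of $\Gamma$ supplies $0 \le t_0 < s_0 < \ell$, both parameters lie in the closed interval $[0,\ell]$, which has length $\ell$; this is the required interval, and nothing further is needed.

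The case $k = 1$ is the one that genuinely uses the cylinder-to-plane correspondence. Here the equivalence gives $p(s_0) = p(t_0+\ell)$, and I would then verify that the interval $[t_0, t_0+\ell]$ of length $\ell$ contains both $s_0$ and $t_0+\ell$ as distinct points. The containments follow at once from $0 \le t_0 < s_0 < \ell \le t_0 + \ell$, while the distinctness $s_0 \neq t_0+\ell$ follows from $s_0 < \ell \le t_0+\ell$. Hence $p$ fails to be injective on $[t_0,t_0+\ell]$.

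I do not anticipate any substantive obstacle: the lemma is essentially bookkeeping with the parameter correspondence between $p$ and its cylinder projection $P$. The only point requiring any care is confirming that the two candidate parameters in the $k=1$ case are genuinely distinct, which is handled by the strict inequality $s_0 < \ell$ built into the definition of a crossing of $\Gamma$. It is also worth noting why the conclusion would become less automatic for other values of $k$ (say $k \le -1$ or $k \ge 2$): then $|t_0 + k\ell - s_0|$ can exceed $\ell$, and no interval of length exactly $\ell$ need contain both parameters, which explains why the hypothesis restricts attention to $k \in \{0,1\}$.
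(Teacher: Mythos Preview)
Your proposal is correct and follows essentially the same approach as the paper's proof: split into the cases $k=0$ and $k=1$, using $p(s_0)=p(t_0)$ in the first and $p(s_0)=p(t_0+\ell)$ with $0<t_0+\ell-s_0<\ell$ in the second. Your version is simply more explicit in naming the interval and checking distinctness, and your closing remark on why $k\notin\{0,1\}$ fails matches the paper's subsequent Remark.
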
\begin{proof}
   By hypothesis
 $\Gamma_Q:=\{ P(s):s\in   [t_0,s_0]\}$, $[t_0,s_0]\subset [0,\ell)$  and
$$u(s_0) = u(t_0) + 2k\pi \text{ for }k \in \{0,\, 1\},\quad v(s_0)=v(t_0).
$$
If $k=0$, $p(s_0)=p(t_0)$ and  the conclusion holds. If $k=1$,  $$p(s_0) = p(t_0+\ell),\quad 0< t_0+\ell-s_0 < \ell,$$
and again the conclusion holds.
\end{proof}

% Change:  Picture tweaked to make the crossing $O$ simple
%
\begin{remark*} Note that if $\#(\Gamma_Q)=-1$, the proof of   Lemma \ref{l2}
leads only to the conclusion that there is an interval  of length $2\ell$
on which $p$ is not injective, as illustrated in the example below.

%\includegraphics{fig_2.eps}

%\begin{figure}
\pspicture*(4.8,-2.7)(17,1.7)
\pscurve[linewidth=1pt](4.9, -1)(6.95,-1.5)(9,-1)%
\pscurve[linewidth=1pt,linestyle=dashed](8.6,-1)(10.65,-1.5)(12.7,-1)
\pscurve[linewidth=1pt](4.9,-1)(8.8,1)(12.7,-1)
\pscurve[linewidth=1pt,linestyle=dashed](8.6,-1)(12.5,1)(16.4,-1)     %
\psline[linewidth=1pt,linestyle=dotted,linecolor=black](12.7,-2)(12.7,.75)%
\psline[linewidth=1pt,linestyle=dotted,linecolor=black](5.3,-2)(5.3,.75)%
\rput(5.1,-2.5){$ -2\pi$}%
\rput(12.8,-2.5){$ 2\pi$}%
\rput(7.5, -1.2){$1\leftarrow$}
\rput(7.5, .2){$2\nearrow$}
\rput(12, -.2){$\searrow 4$}
\rput(15.8, -.2){$\searrow 8$}
\rput(10.5, -1.2){$5\leftarrow$}
\rput(10.4, -0.2){$6\nearrow$}
\rput(9, 1.2){$3\to$}
\rput(12.7, 1.2){$7\to$}
\rput(10.65, .65){$\star$}
\rput(8.7, -1.4){$O$}
\psline[linewidth=1pt,linestyle=dotted,linecolor=black](16.4,-2)(16.4,.75)%
\rput(16.4,-2.5){$ 4\pi$}%
\psline[linewidth=1pt,linestyle=dotted,linecolor=black](9,-2)(9,.75)%
\rput(9,-2.5){$ 0$}%
\endpspicture
%\end{figure}

%\end{remark}
%
% Change: Following wording amended slightly to match tweaking of diagram
%
The segment $1\to 2\to 3\to 4$, in which arrows denote increasing arc-length, represents one period of $\gamma$ in $\RR^2$. The dashed curve $5\to 6\to 7\to 8$ represents the next period. The segment numbered $1$ contains a sub-loop of $\Gamma$  on $\bC$ with winding number $-1$
  and the construction just described leads to the
crossing $O$ on $\gamma$. However, the length of the corresponding closed sub-arc of
$1\to 2\to 3\to 4 \to 5$ in $\R^2$  lies between $\ell$ and $2\ell$ which does not vindicate the Proposition. However, there is another crossing
  $\star$ on $\gamma$, and the closed loop $4\to5\to 6$  satisfies the conclusion of the Proposition.
\qed
\end{remark*}

 The following is the key.

\begin{lem}\label{key}Suppose the crossings of $\Gamma$ are all simple. For any loop at $\wt Q$ of the form   $\Gamma_{\wt Q} = \{P(s): s\in [a,b] \}$, $P(a)=P(b)=\wt Q$,   with
  $\#(\wt \Gamma)>1$,  there exists  a sub-loop  at $\wt Q_1$  of the form   $\Gamma_{\wt Q}:= \{P(s): s \in [a_1,b_1]\}$, $P(a_1)=P(b_1)=\wt Q_1$,  $a\leq a_1<b_1 < b$, with $\#(\Gamma_{\wt Q_1}) =1$.\end{lem}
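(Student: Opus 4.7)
The plan is to prove Lemma \ref{key} by strong induction on the number $N$ of self-crossings of $\Gamma_{\wt Q}$ lying in the open interval $(a,b)$. The base case $N=0$ is vacuous: an embedded closed curve on the cylinder $\bC$ has winding number in $\{-1,0,1\}$, which contradicts the hypothesis $\#(\Gamma_{\wt Q})>1$. For the inductive step, one picks a self-crossing of $\Gamma_{\wt Q}$ and analyses the decomposition it produces.

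First I would handle the easy situation in which $\wt Q$ has an additional preimage $\sigma\in(a,b)$. Then both $\{P(s):s\in[a,\sigma]\}$ and $\{P(s):s\in[\sigma,b]\}$ are genuine interval sub-loops of $\Gamma_{\wt Q}$ with winding numbers summing to $k=\#(\Gamma_{\wt Q})\geq 2$. If either equals $1$ we are done; otherwise, since the sum is at least~$2$, at least one of them must have winding at least~$2$, is a strict sub-interval with strictly fewer self-crossings, and the inductive hypothesis yields the desired winding-$1$ sub-loop. If on the contrary $\wt Q$ has no preimages in $(a,b)$, every crossing $(s_1,s_2)$ is interior, with $a<s_1<s_2<b$ and $P(s_1)=P(s_2)=\wt Q_1\neq\wt Q$, and one considers the interval sub-loop $L_1=\{P(s):s\in[s_1,s_2]\}$ of winding $k_1\in\ZZ$. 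The cases $k_1=1$ and $k_1\geq 2$ are resolved immediately or by induction applied to $L_1$.

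The principal obstacle is the residual case $k_1\leq 0$. Here the complementary loop $L_2=\{P(s):s\in[a,s_1]\cup[s_2,b]\}$ has winding $k-k_1\geq k\geq 2$ and strictly fewer self-crossings than $\Gamma_{\wt Q}$, but its parameter domain is a disjoint union of two intervals rather than a single one. I would extend the inductive hypothesis to cover such ``broken'' loops. A winding-$1$ sub-loop of $L_2$ is either contained entirely in $[a,s_1]$ or $[s_2,b]$ (in which case it is already an interval sub-loop of $\Gamma_{\wt Q}$ of winding~$1$) or ``straddles'' the excised $L_1$ in the form $[a_1,s_1]\cup[s_2,b_1]$. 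In the latter case the corresponding sub-interval $[a_1,b_1]$ of $\Gamma_{\wt Q}$ contains $L_1$, and its winding in $\Gamma_{\wt Q}$ is $1+k_1$; when $k_1=0$ this is exactly~$1$ and the argument closes.

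The hardest part will be dispatching the truly recalcitrant case $k_1\leq -1$, in which the straddling form yields a sub-interval of $\Gamma_{\wt Q}$ of non-positive winding rather than of winding~$1$. I expect the solution is to select the self-crossing more carefully---for example an ``innermost'' one minimising $s_2-s_1$, so that $L_1$ is embedded on $\bC$ and therefore has winding in $\{-1,0,1\}$---and then, using the simplicity of crossings together with the fact that $\#(\Gamma_{\wt Q})\geq 2$, to rule out the scenario in which every innermost sub-loop has winding $-1$. A careful accounting of how the integer $k$ must be distributed as the sum of winding contributions along a maximal nested chain of sub-loops should supply the required contradiction.
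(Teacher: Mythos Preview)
Your inductive framework matches the paper's, and your treatment of the cases $k_1\ge 1$ and $k_1=0$ is correct. But the case $k_1\le -1$ is a genuine gap: your suggested fix (pick an innermost crossing so that $k_1\in\{-1,0,1\}$, then ``rule out'' the all-$(-1)$ scenario by an unspecified accounting of winding contributions) is not a proof, and it is not clear how to turn it into one.

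The paper closes this gap with one clean move that you did not find. Having split $\Gamma_{\wt Q}$ at the crossing $\wt Q_1$, when the outer piece $L_2$ has winding at least $2$ the paper reparametrises $L_2$ so that its basepoint is the \emph{crossing} $\wt Q_1$, not the original basepoint $\wt Q$. The inductive hypothesis then returns a winding-$1$ sub-loop $M$ of $L_2$ which, if not already a sub-interval of $[a,s_1]$ or of $[s_2,b]$, must straddle $\wt Q$ rather than $\wt Q_1$: in the original parametrisation $M=\{P(s):s\in[c,b]\cup[a,d]\}$ with $s_2\le c$ and $d\le s_1$. Now, instead of re-inserting $L_1$ (your step, which yields winding $1+k_1$ and fails when $k_1\le -1$), take the \emph{complement of $M$ in the full loop} $\Gamma_{\wt Q}$. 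This complement is the single interval $[d,c]\subset(a,b)$; it is a loop since $P(d)=P(c)$, has winding number $\#(\Gamma_{\wt Q})-1\ge 1$, and has at most $N-1$ crossings. If its winding equals $1$ you are finished; otherwise the inductive hypothesis applies directly to $[d,c]$. This complement trick is insensitive to the sign of $k_1$ and eliminates your recalcitrant case entirely.
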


\begin{proof} Since $\#(\Gamma_{\wt Q})>1$ it follows from the topology of the cylinder that $\Gamma_{\wt Q}$ has a crossing.
The proof is by induction on the number of crossings.

If $\Gamma_{\wt Q}$ has only  one crossing,
 $\Gamma_{\wt Q}$ is the union of two loops,  $\wt\Gamma_1$ and $\wt\Gamma_2$,  based at a point of $ \Gamma_{\wt Q}$. Since they have  no crossings, each  has winding number  $\pm 1$ or $0$. Since the sum of their winding numbers  is $\#(\Gamma_{\wt Q}) > 1$,  each has winding number 1 and $\#(\Gamma_{\wt Q})=2$. If   $ \wt Q \in \widetilde\Gamma_2$, then the sub-path $\widetilde\Gamma_1$ satisfies the conclusion of the lemma, and vice versa.

Now we make the inductive hypothesis that the lemma  holds for any  loop  $\Gamma_{\wt Q}$ of the form in the lemma with no more than $N-1$ crossings, $N \geq 2$.

Suppose a loop  $\Gamma_{\wh Q}=\{P(s): s \in [\wh a,\wh b]\}$, $P(\wh a)=P(\wh b)= \wh Q$,  has $N$ crossings. Choose one of them,  $P(s_1) = P(t_1)=:\wt Q$, say. This splits  $\Gamma_{\wh Q}$ into two loops, $\wt\Gamma_1$ and $\wt\Gamma_2$, based at $\wt Q$. If they both have winding number 1, then the result follows, exactly as in the case $N=1$ above.    Otherwise one of them, $\wt\Gamma_1$ say, has winding number at least  2 and no more than  $N-1$ crossings.

Now, momentarily, let $\wt Q$ be the origin of arc length so that $\wt\Gamma_{1} = \{P(s): s \in [0,\wt t\,]\}$ where $s$ is arc length measured from $\wt Q$  along $\wt \Gamma_1$.  Then, by induction, there is a loop $\wt\Gamma_{11}$ in $\wt\Gamma_1$, satisfying  the conclusion of the lemma with $[0,\wt t\,]$ instead of $[a,b]$, and winding number 1.

If $\wt\Gamma_{11}$ does not contain $\wh Q$, then $\wt\Gamma_{11}$ with the original parametrization satisfies the conclusion of the lemma.

   If $\wt\Gamma_{11}$ does contain $\wh Q$, then its complement in $\wh\Gamma$ is a sub-path $\wt\Gamma_{12}= \{P(s): s \in [a',b']\subset [a,b]\}$ of $\wh \Gamma$, with winding  number  not smaller than 1 and  no more than $N-1$ crossings.

If the winding number of $\wt\Gamma_{12}$ is 1, then we are done. If it exceeds 1, then the required conclusion follows from the inductive hypothesis.
\end{proof}

\begin{lem}\label{howie} If
 $\#(\Gamma_Q)>1 $ for a crossing $Q$ of $\Gamma$, then $p$ is not injective on some closed interval of length $\ell$.
\end{lem}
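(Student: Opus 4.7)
My plan is to chain together the two preceding lemmas. Lemma \ref{l2} already handles the case of a crossing whose defining loop has winding number in $\{0,1\}$, and Lemma \ref{key} peels a loop of large winding number down to one of winding number exactly $1$. So the remaining work is just to splice these results together.

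I would proceed in three short steps. First, apply Lemma \ref{key} to the loop $\Gamma_Q = \{P(s) : s \in [t_0,s_0]\}$, which by hypothesis satisfies $\#(\Gamma_Q) > 1$; this produces a sub-loop $\Gamma_{Q_1} = \{P(s) : s \in [a_1,b_1]\}$ with $t_0 \le a_1 < b_1 < s_0$, $P(a_1) = P(b_1) = Q_1$, and $\#(\Gamma_{Q_1}) = 1$. Second, observe that $Q_1$ is itself a crossing of $\Gamma$, because $a_1 \ne b_1$ and both parameters lie in $[0,\ell)$ since $[a_1,b_1] \subset [t_0,s_0] \subset [0,\ell)$. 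Third, feed $Q_1$ into Lemma \ref{l2}: its hypothesis $\#(\Gamma_{Q_1}) \in \{0,1\}$ is met, and the $k=1$ branch of that proof yields $p(b_1) = p(a_1 + \ell)$ with $0 < a_1 + \ell - b_1 < \ell$, so $p$ fails to be injective on the closed interval $[b_1, b_1 + \ell]$ of length $\ell$, which is the desired conclusion.

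I expect no real obstacle here, since all the topological content has been carried by Lemma \ref{key}. The only thing to verify carefully is that the base point $Q_1$ delivered by Lemma \ref{key} genuinely qualifies as a crossing in the sense required by Lemma \ref{l2}; this follows at once from the strict inequality $a_1 < b_1$ together with the fact that both parameters sit inside a single fundamental period $[0,\ell)$, so the parameters are not identified by periodicity.
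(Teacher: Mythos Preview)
Your chaining of Lemma~\ref{key} and Lemma~\ref{l2} is exactly the paper's approach in the simple-crossing case, and the bookkeeping you do (that $a_1,b_1\in[0,\ell)$, so $Q_1$ is a genuine crossing of $\Gamma$) is correct.

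However, you have overlooked a hypothesis: Lemma~\ref{key} is stated under the assumption that \emph{all crossings of $\Gamma$ are simple}. Lemma~\ref{howie} carries no such assumption, so you cannot invoke Lemma~\ref{key} directly. Note also that simplicity of the crossings of $\gamma$ (as in the Proposition) does not force simplicity of the crossings of $\Gamma$: a crossing of $\Gamma$ with nonzero winding number corresponds not to a self-intersection of $\gamma$ but to a horizontal chord of length $2\pi|k|$, and nothing prevents three or more parameter values in $[0,\ell)$ from projecting to the same point of $\bC$, or two of them from having coincident tangent directions. The paper closes this gap by first running your argument under the extra assumption that $\Gamma$ has only simple crossings, and then, in the general case, passing to a nearby periodic curve $\gamma_1$ whose projection $\Gamma_1$ does have only simple crossings, applying the simple case there, and taking a limit. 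You should add this approximation step (or otherwise justify why Lemma~\ref{key} applies) to make the proof complete.
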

\begin{proof}
Assume first that all the crossings of the original curve $\Gamma$ are simple.   Putting $\wt \Gamma=\Gamma_Q$ in  Lemma \ref{key}  gives the existence of a crossing of $\Gamma$ with winding number 1. The required result follows by  Lemma \ref{l2} when all the crossings of $\Gamma$ are simple.
If the crossings of $\Gamma$ are not all simple, apply the conclusion of Lemma \ref{key}
%
% Change:
%  I'm not convinced that we can eliminate smoothness -- or at least differentiability.
%  The definition of simple crossing specifically uses $p'$.  But piecewise linear
%  curves can also be approximated by smooth ones, so we're OK.
%
%  (in which smoothness of $\Gamma$ played no part)
to a uniform periodic approximation $\gamma_1$  of $\gamma$  parametrized by a
%see above
smooth
%piecewise linear
periodic function $p_1$  with the property that each crossing of $\Gamma_1$  is simple
%
% Change: added text
and close to a crossing of $\Gamma$.
The required result in the general case will follow by a simple limiting argument.
\end{proof}

\emph{Proof of the Proposition}. (a)
If $p$ is not injective, $\Gamma$ has a crossing, $Q$. Suppose $P(t_0)=P(s_0)$, $0\leq t_0<s_0<\ell$, Then, in the  notation of \eqref{gammaQ},
$\Gamma_Q = \{ P(s): s \in [t_0,s_0]\}$ and there is a minimal sub loop $\Gamma_{Q_1}=\{P(s): s \in [t_1,s_1]\}$ of $\Gamma_Q$ (a loop in $\Gamma_Q$ which has no proper sub loop)  $[t_1,s_1]\subset [t_0,s_0]$, $P(s_1)=P(t_1)=:Q_1$. Since
 $\Gamma_{Q_1}$ has no crossings,  $|\#(\Gamma_{Q_1}) |\leq 1$.

Now  we observe that if $p$ is not injective, then it is not injective on some interval of length $\ell$.
 If
 $\#(\Gamma_{Q_1}) \in \{0,1\}$, the observation holds by Lemma \ref{l2}. If $\#(\Gamma_{Q_1}) = -1$, since $\#(\Gamma) =1 $,  the complement of $\Gamma_{Q_1}$ in $\Gamma$ has winding number 2 and the observation holds,  by Lemma \ref{howie}.

 Now consider an interval  $[a,a+\ell]$ on which $p$ is not injective. Since $p(a+\ell) = p(a) + (2\pi,0)$, it follows easily (from the diagram below!) that the length of any loop in this periodic segment of $\gamma$ does not exceed $\ell-2\pi$. Hence there is an interval of length $\ell-2\pi$ on which $p$ is not injective.

\pspicture*(4.8,-1.5)(17,1.7)
\pscurve[linewidth=1pt](6.8,-.5)(7,-.5)(12.5,-.5)(14,.5)(12.5,1.2)(11,.5)(12.5,-.5)(14,-.5)(14.2,-.5)     %
\psline[linewidth=1pt,linestyle=dotted,linecolor=black](14.2,-1)(14.2,1.25)%
\psline[linewidth=1pt,linestyle=dotted,linecolor=black](6.8,-1)(6.8,1.25)%
\rput(6.35,-1){$u(a)$}
\rput(15,-1){$u(a) + 2\pi$}
\endpspicture

 (b) A  classical result \cite{aschur} in the case of plane curves is the following  \cite[Remark on p. 38]{chern}.
\begin{schur}Suppose that $\Upsilon_i=\{\upsilon_i(s):s \in [0,S]\}$, $i=1,\,2$, are two plane curves parametrized by arc length, with  the same length $S$ and with   curvatures $\kappa_i(s)$ at $\upsilon_i(s)$. Suppose that $\Upsilon_1$ has no self-intersections and, along with the chord from $\upsilon_1(0)$ to $\upsilon_1(S)$, bounds a convex region. Furthermore, suppose that $|\kappa_2| \leq \kappa_1$ on $[0,S]$. Then $|\upsilon_2(s) - \upsilon_2(0)| \geq |\upsilon_1(s) - \upsilon_1(0)|$, $s \in [0,S]$.

\end{schur}
Let $\Upsilon_2$ be a closed loop in $\gamma$ with length $S$ no greater than $\ell-2\pi$ and suppose that at every point its curvature $|\kappa_2| \leq 2\pi(1-\epsilon)/(\ell-2\pi)$ for some $\epsilon >0$.  Let $\Upsilon_1$ be the segment of length $S$ of  a circle of radius  $(\ell-2\pi)/(2\pi(1-\epsilon))$. Now $|\kappa_2| \leq \kappa_1$,  $\Upsilon_1$ is not closed but $\Upsilon_2$ is closed, which contradicts  Schur's result. Hence no such $\epsilon$ exists, which  proves (b).

 (c) Consider a periodic segment of $\gamma$ with only one crossing at an angle $\alpha$, as illustrated by the solid line  in the diagram. Now extend this segment  as a smooth closed curve of length $\ell + L$ with no further crossings (the extension is the dashed curve $\wt \gamma$).

\pspicture*(4.5,-3.2)(17,1.5)
%\psline[linewidth=1pt](6.8, -1)(14.2,-1)%
\pscurve[linewidth=1pt]{->}(7.0,-1)(7,-1)(12.5,-1)(14,0)(12.5,1)(11,0)(12.5,-1)(14,-1)(14.2,-1){->}     %
\pscurve[linewidth=1pt,linestyle=dashed](14.1,-1)(14.2,-1)(15,-1.3)(12,-2.5)(6,-1.4)(6.6,-1)(6.8,-1)
\rput(12.3,-.8){$\alpha$}%
\rput(12.3,-1.4){$s=\ell_1$ and $s=\ell_2$}%
\rput(6.8,-.75){$s=0$ and $s = \ell+L$}%
\rput(14,-.8){$s=\ell$}%
\rput(12.3,1.2){$\leftarrow$}%
\rput(12.3,-2.8){$\leftarrow$}%
\rput(9.3,-1){$\to$}%
\endpspicture

By the hypothesis of part (c),
$$\int_0^\ell \vt'(s)\,ds =0,
\text{
and by construction, }
\int_\ell^{\ell+L} \vt'(s) ds =-2\pi.
$$ So, from the hypothesis, the integral of $\vt'$ around the oriented loop $\gamma \cup \wt\gamma$ is $-2\pi$.
On the other hand, by the Hopf's Umlaufsatz for curvilinear polygons \cite[\S 13.2]{pres},
%
% Change: second integral split into two to match parametrisation
%
 $$\left|\int_{\ell_1}^{\ell_2} \vt'(s) ds\right| =\pi+\alpha = \left| \int_{\ell_2}^{\ell+L} \vt'(s) ds + \int_0^{\ell_1} \vt'(s) ds \right|.
 $$
 This is impossible since $\alpha \notin\{ 0,\,\pi\}$, because all crossings are simple. This contradiction completes the proof.

\begin{flushleft}
J. Howie\\
Department of Mathematics and Maxwell Institute for Mathematical Sciences\\
Heriot-Watt University\\
Edinburgh EH14 4AS
\end{flushleft}

\begin{flushleft}
J. F. Toland\\
Department of Mathematical Sciences\\
University of Bath\\
Bath   BA2 7AY\\
\end{flushleft}

\end{document}